\theoremstyle{plain}
 \newtheorem{thm}{Theorem}[section]
 \newtheorem{lem}{Lemma}[section]
\theoremstyle{definition}
\theoremstyle{remark}
 \numberwithin{equation}{section}
\renewcommand{\ge}{\geqslant}
\title[Running title]{4-dimensional (para)-K\"ahler--Weyl structures}
\subjclass[2010]{Primary 53B05 Secondary 15A72, 53A15, 53C07}
\author[Gilkey]{\bfseries Peter Gilkey}
\address{
Mathematics Department, University of Oregon\\ Eugene OR 97403 USA}
\email{gilkey@uoregon.edu}
\author[Nik\v cevi\'c]{\bfseries Stana Nik\v cevi\'c}
\address{Mathematical Institute, Sanu, Knez Mihailova 36, p.p. 367\\11001 Beograd,
 Serbia}
  \email{stanan@mi.sanu.ac.rs}
\thanks{Research of the authors partially supported by project MTM2009-07756 
(Spain), and by project 174012 (Serbia)} 
\def\dvol{\mu}
\begin{document}

\vspace{18mm}
\setcounter{page}{1}
\thispagestyle{empty}

\begin{abstract}
We give an elementary proof of the fact that
any 4-dimensional para-Hermitian manifold admits a unique para-K\"ahler--Weyl structure.
 We then use analytic continuation to pass from the para-complex
 to the complex setting and thereby show any 4-dimensional pseudo-Hermitian manifold 
 also admits a unique K\"ahler--Weyl structure.
\end{abstract}

\maketitle

\section{Introduction}

\subsection{Weyl manifolds}
Let $(M,g)$ be a pseudo-Riemannian manifold of dimension $m$. A triple $(M,g,\nabla)$ is said to be a {\it Weyl manifold} and $\nabla$ is said to be a
{\it Weyl connection} if $\nabla$ is a torsion free connection with
$\nabla g=-2\phi\otimes g$ for some smooth $1$-form $\phi$. This is a conformal theory; if $\tilde g=e^{2f}g$ is a conformally equivalent metric,
then $(M,\tilde g,\nabla)$ is a Weyl manifold with associated $1$-form $\tilde\phi=\phi-df$.
If $\nabla^g$ is the Levi-Civita connection, we may then express $\nabla=\nabla^\phi$ in the form:
\begin{equation}\label{eqn-1.1}
\nabla^\phi_xy=\nabla_x^gy+\phi(x)y+\phi(y)x-g(x,y)\phi^{\#}
\end{equation}
where $\phi^{\#}$ is the dual vector field. Thus $\phi$ determines $\nabla$. Conversely, if $\phi$ is given and if we use Equation~(\ref{eqn-1.1})
to define $\nabla$, then $\nabla$ is a Weyl connection with associated 1-form $\phi$. We refer to \cite{GNU10} for further details concerning
Weyl geometry.

\subsection{Para-Hermitian manifolds}
Let $m=2\bar m$.
A triple $(M,g,J_+)$ is said to be an {\it almost para-Hermitian manifold} with an
{\it almost para-complex structure} $J_+$ if $g$ is 
a pseudo-Riemannian metric on $M$ of neutral signature $(\bar m,\bar m)$
and if $J_+$ is an endomorphism of the tangent bundle $TM$ so that $J_+^2=\operatorname{Id}$ and so that $J_+^*g=-g$;
$(M,g,J_+)$ is said to be {\it para-Hermitian} with an {\it integrable complex structure} $J_+$ if the {\it para-Nijenhuis} tensor
$$
N_{J_+}(x,y):=[x,y]-J_+ [J_+ x,y]-J_+ [x,J_+ y]+[J_+ x,J_+ y]\
$$
vanishes or, equivalently, if there are local coordinates $(u^1,...,u^{\bar m},v^1,...,v^{\bar m})$
centered at an arbitrary point of $M$ so that:
$$J_+\partial_{u_i}=\partial_{v_i}\quad\text{and}\quad J_+\partial_{v_i}=\partial_{u_i}\,.$$

\subsection{Pseudo-Hermitian manifolds}
Let $m=2\bar m$. A triple $(M,g,J_-)$ is said to be an {\it almost pseudo-Hermitian manifold} with an {\it almost complex structure}
$J_-$ if $(M,g)$ is a pseudo-Riemannian manifold, 
if $J_-$ is an endomorphism of the tangent bundle so that $J_-^2=-\operatorname{id}$ and so that $J_-^*g=g$; 
$(M,g,J_-)$ is said to be a {\it pseudo-Hermitian} manifold with an {\it integrable complex structure} $J_-$ if the {\it Nijenhuis} tensor
$$
N_{J_-}(x,y):=[x,y]+J_- [J_- x,y]+J_- [x,J_- y]-[J_- x,J_- y]
$$
vanishes or, equivalently, if there are local coordinates $(u^1,...,u^{\bar m},v^1,...,v^{\bar m})$
centered at an arbitrary point of $M$
so that:
$$J_-\partial_{u_i}=\partial_{v_i}\quad\text{and}\quad J_-\partial_{v_i}=-\partial_{u_i}\,.$$

\subsection{Para-K\"ahler and K\"ahler manifolds}
One says that a Weyl connection $\nabla$ on a para-Hermitian manifold $(M,g,J_+)$
is a {\it para-K\"ahler--Weyl} connection 
if $\nabla J_+=0$. Similarly, one says that a Weyl connection $\nabla$ on a pseudo-Hermitian manifold 
$(M,g,J_-)$ is a 
{\it K\"ahler--Weyl} connection if $\nabla J_-=0$. Since $\nabla J_\pm=0$ implies
$J_\pm$ to be integrable, we assume
this condition henceforth. If $\nabla=\nabla^g$ is the Levi-Civita connection, 
then $(M,g,J_\pm)$ is said to be {\it (para)-K\"ahler}.

Let $\star$ be the Hodge operator and let $\Omega_\pm(x,y):=g(x,J_\pm y)$
be the (para)-K\"ahler form. The co-derivative $\delta\Omega_\pm$ is given,
see \cite{G94} for example, by the formula:
$$
\delta\Omega_\pm=-\star d\star\Omega_\pm\,.
$$

The following is well known -- see,
for example, the discussion in \cite{PPS93} of the Riemannian setting (which
uses results of \cite{V82,V83}) and the generalization given in \cite{GN11}
to the more general context:

\begin{thm}\label{thm-1.1} Let $m\ge6$. If $(M,g,J_\pm,\nabla)$ is a 
(para)-K\"ahler--Weyl structure, then the associated Weyl structure is trivial, i.e.
there is always locally a conformally equivalent metric $\tilde g=e^{2f}g$ so that
$(M,\tilde g,J_\pm)$ is (para)-K\"ahler and so that $\nabla=\nabla^{\tilde g}$.
\end{thm}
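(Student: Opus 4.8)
The plan is to show directly that the $1$-form $\phi$ determining the Weyl connection is closed; once $d\phi=0$ one writes $\phi=df$ on a simply connected neighborhood and passes to $\tilde g=e^{2f}g$, for which $\tilde\phi=\phi-df=0$, so that $\nabla=\nabla^{\tilde g}$ together with $\nabla J_\pm=0$ exhibits $(M,\tilde g,J_\pm)$ as (para-)K\"ahler. The entire difficulty is thus concentrated in proving $d\phi=0$, and this is precisely where the hypothesis $m\ge6$ will enter.

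First I would make the hypothesis algebraic. Substituting the defining formula~(\ref{eqn-1.1}) into $\nabla^\phi J_\pm=0$ and solving for the Levi-Civita derivative gives
\begin{equation*}
(\nabla^g_xJ_\pm)y=-\phi(J_\pm y)x+g(x,J_\pm y)\phi^\#+\phi(y)J_\pm x-g(x,y)J_\pm\phi^\#.
\end{equation*}
Rewriting this via $(\nabla^g_x\Omega_\pm)(y,z)=g\big(y,(\nabla^g_xJ_\pm)z\big)$ in terms of the (para-)K\"ahler form $\Omega_\pm(x,y)=g(x,J_\pm y)$, and abbreviating $\psi:=\phi\circ J_\pm$, a direct substitution yields
\begin{equation*}
(\nabla^g_x\Omega_\pm)(y,z)=\psi(y)g(x,z)-\psi(z)g(x,y)+\phi(y)\Omega_\pm(x,z)-\phi(z)\Omega_\pm(x,y).
\end{equation*}
Taking the cyclic sum to form $d\Omega_\pm$, the terms built from $\psi$ and the symmetric tensor $g$ cancel in pairs, while the remaining terms assemble into the Lee-form identity $d\Omega_\pm=-2\,\phi\wedge\Omega_\pm$. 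Differentiating this identity and using $\phi\wedge\phi=0$ and $d^2=0$ leaves the single equation
\begin{equation*}
d\phi\wedge\Omega_\pm=0.
\end{equation*}

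The decisive step is to deduce $d\phi=0$ from this. Because $J_\pm$ is invertible and $g$ is nondegenerate, $\Omega_\pm$ is a nondegenerate $2$-form, so the Lefschetz operator $L\colon\alpha\mapsto\alpha\wedge\Omega_\pm$ is governed by the $\mathfrak{sl}_2$-representation theory of the exterior algebra of a symplectic vector space. That theory shows that $L\colon\Lambda^p\to\Lambda^{p+2}$ is injective exactly when $p\le\bar m-1$; for a $2$-form this reads $2\le\bar m-1$, i.e. $\bar m\ge3$, i.e. $m\ge6$. This is a statement of linear symplectic algebra alone and is insensitive to the signature of $g$ or to whether $J_\pm^2=\pm\operatorname{Id}$, so it applies uniformly to the complex and para-complex cases. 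Hence for $m\ge6$ we obtain $d\phi=0$, which completes the argument as described above.

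I expect the genuine obstacle to be not the computation — solving for $\nabla^gJ_\pm$ and extracting $d\Omega_\pm$ is routine once the conventions $J_\pm^2=\pm\operatorname{Id}$ and $J_\pm^*g=\mp g$ are tracked — but rather isolating and justifying the exact dimensional threshold. One should stress that for $\bar m=2$, i.e. the four-dimensional case occupying the rest of the paper, the map $L\colon\Lambda^2\to\Lambda^4$ fails to be injective, so that $d\phi\wedge\Omega_\pm=0$ no longer forces $d\phi=0$; this is precisely why a nontrivial (para-)K\"ahler--Weyl structure can survive in dimension four.
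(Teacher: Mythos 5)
Your proof is correct, and there is an important point of comparison to flag first: the paper contains no proof of Theorem~\ref{thm-1.1} at all --- it is quoted as ``well known,'' with pointers to \cite{PPS93} (resting on \cite{V82,V83}) for the Riemannian case and to \cite{GN11} for the pseudo- and para-Hermitian generalization. What you have written is, in substance, the classical argument from that cited literature, reconstructed correctly and uniformly in both settings. Your formula for $\nabla^g_xJ_\pm$ follows from Equation~(\ref{eqn-1.1}), and the identity $g(y,J_\pm\phi^{\#})=-\phi(J_\pm y)$ holds under both sign conventions $J_\pm^2=\pm\operatorname{Id}$, $J_\pm^*g=\mp g$, which is exactly what makes the $\psi$-terms cancel in the cyclic sum and yields $d\Omega_\pm=-2\phi\wedge\Omega_\pm$ (the constant depends on one's convention for $d$, but this is immaterial); then $0=d^2\Omega_\pm$ gives $d\phi\wedge\Omega_\pm=0$, and injectivity of $L:\Lambda^p\to\Lambda^{p+2}$ for $p\le\bar m-1$ is pure linear algebra for a nondegenerate skew $2$-form --- as you say, insensitive to the signature of $g$ --- so $\bar m\ge3$ forces $d\phi=0$ and the conformal gauge $\tilde g=e^{2f}g$ with $\phi=df$ trivializes the Weyl structure, with $\nabla=\nabla^{\tilde g}$ and $\nabla J_\pm=0$ giving the (para)-K\"ahler conclusion. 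Your closing observation is also exactly the right one for this paper: for $\bar m=2$ the map $L:\Lambda^2\to\Lambda^4$ has nontrivial kernel while $L:\Lambda^1\to\Lambda^3$ is an isomorphism, which is precisely the mechanism permitting the existence and uniqueness statement of Theorem~\ref{thm-1.2}. By contrast, the route in \cite{GN11,GN12} proceeds through a Higa-type curvature decomposition adapted to the (para)-K\"ahler--Weyl setting, which handles all dimensions in one framework; your argument is the more elementary and self-contained one for $m\ge6$.
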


By Theorem~\ref{thm-1.1}, only the $4$-dimensional setting is relevant. The following is the main result of this short note;
it plays a central role in the discussion of \cite{BGGV12}.
\begin{thm}\label{thm-1.2}
\ \begin{enumerate}
\item If $\mathcal{M}=(M,g,J_+)$ is a para-Hermitian manifold of signature $(2,2)$, then there is a unique para-K\"ahler--Weyl structure on $\mathcal{M}$
with $\phi=\frac12J_+\delta\Omega_+$.
\smallbreak\item If $\mathcal{M}=(M,g,J_-)$ is a pseudo-Hermitian manifold of signature $(2,2)$, then there is
a unique K\"ahler--Weyl structure on $\mathcal{M}$ with $\phi=-\frac12J_-\delta\Omega_-$.
\smallbreak\item If $\mathcal{M}=(M,g,J_-)$ is a Hermitian manifold of signature $(0,4)$, then there is
a unique K\"ahler--Weyl structure on $\mathcal{M}$ with $\phi=-\frac12J_-\delta\Omega_-$.
\end{enumerate}\end{thm}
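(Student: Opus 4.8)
The plan is to establish assertion (1) by a direct pointwise computation and then to deduce assertions (2) and (3) from it by analytic continuation, exactly as foreshadowed in the abstract. The starting observation is that the equation $\nabla^\phi J_+=0$ is, at each point, an algebraic equation that is linear in the unknown $\phi$ and in the $1$-jet of the structure. Using Equation~(\ref{eqn-1.1}), I would expand
$$(\nabla^\phi_x J_+)y=(\nabla^g_x J_+)y+\phi(J_+ y)x-\phi(y)J_+ x-g(x,J_+ y)\phi^{\#}+g(x,y)J_+\phi^{\#},$$
so that the para-K\"ahler--Weyl condition becomes the pointwise identity $(\nabla^g_x J_+)y=\phi(y)J_+ x-\phi(J_+ y)x+g(x,J_+ y)\phi^{\#}-g(x,y)J_+\phi^{\#}$. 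Everything thus reduces to solving this single linear equation for $\phi$ in terms of the tensor $\Theta:=\nabla^g J_+$.

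Before solving, I would record the symmetries that $\Theta$ is forced to satisfy. Differentiating $J_+^2=\operatorname{Id}$ shows that $\Theta(x,\cdot)$ anticommutes with $J_+$, i.e. $\Theta(x,J_+ y)=-J_+\Theta(x,y)$; since $\nabla^g g=0$ and $\Omega_+$ is antisymmetric, the derivative $(\nabla^g_x\Omega_+)(y,z)=g(y,\Theta(x,z))$ is antisymmetric in $(y,z)$; and the integrability hypothesis $N_{J_+}=0$ converts the remaining freedom in $\nabla^g J_+$ into the datum $d\Omega_+$. These relations confine $\Theta$ to a specific subspace $W$ of the $(1,2)$-tensors, a representation of the relevant structure group. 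I would then compute $\delta\Omega_+$ as the appropriate metric trace of $\nabla^g\Omega_+$, schematically $\delta\Omega_+=-\sum_i\epsilon_i(\nabla^g_{e_i}\Omega_+)(e_i,\cdot)$ for a suitable frame $\{e_i\}$, in order to locate the candidate $\tfrac12 J_+\delta\Omega_+$ within this picture.

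The heart of the matter, and the step I expect to be the main obstacle, is the assertion that in the four-dimensional case ($\bar m=2$) the linear map $L\colon\phi\mapsto\phi(\cdot)J_+(\cdot)-\cdots$ carries the space of $1$-forms isomorphically onto $W$. Injectivity holds quite generally and yields the uniqueness statement; the delicate point is surjectivity, which is exactly the low-dimensional feature underlying unconditional existence and which fails in higher dimensions, consistent with the rigidity of Theorem~\ref{thm-1.1}. I would verify it by checking that the integrability hypothesis cuts $W$ down precisely to the image of $L$ when $\bar m=2$, via a dimension count of $W$ against the four-dimensional space of $1$-forms, and I would then pin down the solution explicitly by contracting the reduced equation to isolate $\delta\Omega_+$, thereby recovering $\phi=\tfrac12 J_+\delta\Omega_+$.

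Finally, for (2) and (3) I would pass from the para-complex to the complex setting by analytic continuation. All of the identities above are polynomial in $g$, in $J$, and in the first derivatives of the structure, and a para-complex structure ($J^2=\operatorname{Id}$) and a complex structure ($J^2=-\operatorname{Id}$) are interchanged by the formal substitution $J_+\leftrightarrow iJ_-$, under which $\Omega_+=g(\cdot,J_+\cdot)\leftrightarrow i\Omega_-$ and hence $\tfrac12 J_+\delta\Omega_+\leftrightarrow-\tfrac12 J_-\delta\Omega_-$, accounting neatly for the sign change in the formula. Because the resulting statement is an algebraic identity insensitive to the real signature once the data are complexified, the same computation delivers both the pseudo-Hermitian case of signature $(2,2)$ and the Hermitian case of signature $(0,4)$, with uniqueness transported along with existence.
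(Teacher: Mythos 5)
Your proposal is correct in outline but follows a genuinely different route from the paper for the existence part. For Assertion~(1), the paper does not analyze the map $L\colon\phi\mapsto\phi(\cdot)J_+(\cdot)-\phi(J_+\cdot)(\cdot)+g(\cdot,J_+\cdot)\phi^{\#}-g(\cdot,\cdot)J_+\phi^{\#}$ at all: it observes that $(\nabla^\phi J_+)(0)$ depends linearly on the $1$-jet of the metric perturbation $\varepsilon\in S^2_-\otimes V^*$, verifies the vanishing by an explicit Christoffel computation for the single conformal family $g=dx^1\circ dx^3+e^{2f}dx^2\circ dx^4$, and then uses the index swap $1\leftrightarrow2$, $3\leftrightarrow4$ together with equivariance under a para-unitary transformation to sweep out a spanning set of $S^2_-$, forcing the linear map $\mathcal{E}$ to vanish identically. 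Your route is instead the classical Gray--Hervella/Lee-form argument: identify the space $W$ of admissible tensors $\nabla^gJ_+$ (skew-adjoint, anticommuting with $J_+$, which in dimension $4$ gives $\dim W=8$ before integrability), impose $N_{J_+}=0$ (four independent conditions, as the complexified Nijenhuis tensor lives in $\Lambda^{0,2}\otimes T^{1,0}$ plus its conjugate), and match the resulting $4$-dimensional kernel with the injective image of $L$; this is sound, is exactly the low-dimensional phenomenon ($W_3=0$ when $\bar m=2$) that makes Theorem~\ref{thm-1.1} fail in dimension $4$, and your contraction recovering $\phi=\frac12J_+\delta\Omega_+$ is the standard isolation of the Lee form. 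Two caveats: first, the step you yourself flag as ``the main obstacle'' --- the dimension count showing the Nijenhuis constraint cuts $W$ exactly to the image of $L$, including independence of those four conditions and the observation that $\operatorname{im}L\subseteq\ker N$ because $N$ is independent of the choice of torsion-free connection --- is announced rather than executed, and it is the entire content of the existence proof on your route, so it must actually be carried out. Second, your final paragraph compresses the passage to Assertions~(2) and~(3) to a formal substitution $J_+\leftrightarrow\sqrt{-1}J_-$; on your algebraic route this is nearly legitimate, since your linear algebra works verbatim over $\mathbb{C}$, whereas the paper, whose proof of (1) is genuinely real, must work harder: it sets up $\mathcal{E}(g_0,g_1;J_+)$ as a holomorphic function of the complexified jets and applies the identity theorem twice, first in $(g_0,g_1)$ with $J_+$ real, then along $\operatorname{GL}_4(\mathbb{C})$ orbits to reach complex $J_+$. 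In short, your approach trades the paper's minimal-machinery example-plus-equivariance computation for a more conceptual representation-theoretic argument that also lightens the analytic-continuation step, at the price of one substantive linear-algebra verification you still owe.
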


Assertion (3) of Theorem~\ref{thm-1.2}, which deals with the Hermitian setting, is well known -- see, for example,
the discussion in \cite{KK10}. 
Subsequently, Theorem~\ref{thm-1.2} was established full generality (see \cite{GN11,GN12}) by extending the 
Higa curvature decomposition \cite{H93,H94} from the real to the K\"ahler--Weyl and to
the para-K\"ahler Weyl contexts. 

Here is a brief outline to this paper. In Section~\ref{sect-2}, we will show that if a (para)-K\"ahler Weyl structure exists, then it
 is unique. In Section~\ref{sect-3}, we will give a direct proof of Assertion (1) of Theorem~\ref{thm-1.2} in the
para-Hermitian setting. In Section~\ref{sect-4}, we will use analytic continuation to derive 
Assertions (2) and (3), which deal with the complex setting, from Assertion (1).
This reverses the usual procedure of viewing para-complex geometry
setting as an adjunct to complex geometry and is a novel feature of this paper.

\section{Uniqueness of the (para)-K\"ahler--Weyl structure}\label{sect-2}
This section is devoted to the proof of the following uniqueness result:

\begin{lem}
\ \begin{enumerate}
\item If $\nabla^{\phi_1}$ and $\nabla^{\phi_2}$ are two para-K\"ahler--Weyl connections on a $4$-dimensional para-Hermitian manifold $(M,g,J_+)$,
then $\phi_1=\phi_2$.
\item If $\nabla^{\phi_1}$ and $\nabla^{\phi_2}$ are two K\"ahler--Weyl connections on a $4$-dimensional pseudo-Hermitian manifold $(M,g,J_-)$,
then $\phi_1=\phi_2$.
\end{enumerate}
\end{lem}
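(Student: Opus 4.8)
The plan is to reduce both assertions to a single algebraic computation that works uniformly in the para-Hermitian and pseudo-Hermitian cases. Write $J$ for $J_\pm$ and set $\psi:=\phi_1-\phi_2$; since the two connections are built from the same metric $g$, we have $\psi^{\#}=\phi_1^{\#}-\phi_2^{\#}$, and by Equation~(\ref{eqn-1.1}) the difference of the two Weyl connections is the tensor
\begin{equation*}
D(x,y):=\nabla^{\phi_1}_xy-\nabla^{\phi_2}_xy=\psi(x)y+\psi(y)x-g(x,y)\psi^{\#}.
\end{equation*}
It suffices to show that $\psi=0$.

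First I would exploit that both connections preserve $J$. Since $\nabla^{\phi_i}J=0$ is equivalent to $\nabla^{\phi_i}_x(Jy)=J\nabla^{\phi_i}_xy$, subtracting the two identities yields $D(x,Jy)=J\,D(x,y)$. Expanding both sides with the formula for $D$ and cancelling the common term $\psi(x)\,Jy$, I obtain the pointwise identity
\begin{equation*}
\psi(Jy)\,x-\psi(y)\,Jx-g(x,Jy)\,\psi^{\#}+g(x,y)\,J\psi^{\#}=0,
\end{equation*}
valid for all $x,y$. For fixed $y$ this asserts that a certain endomorphism of the tangent space, linear in $x$, vanishes identically.

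The key step is then to take the trace of this endomorphism in the variable $x$. Here I would use three facts that hold in both settings: $\operatorname{tr}(\operatorname{Id})=4$ because $m=4$; $\operatorname{tr}(J)=0$ (the eigenvalues of $J$ are $\pm1$ each with multiplicity $2$ in the para-Hermitian case, while $J$ is an almost complex structure with vanishing real trace in the pseudo-Hermitian case); and the compatibility relation $g(Jx,y)=-g(x,Jy)$, which one checks follows from $J^*g=\mp g$ together with $J^2=\pm\operatorname{Id}$ in either case. Tracing the four terms gives respectively $4\psi(Jy)$, $0$, $-\psi(Jy)$, and $-\psi(Jy)$, so the identity collapses to $2\psi(Jy)=0$ for all $y$. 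Since $J$ is invertible, this forces $\psi=0$ and hence $\phi_1=\phi_2$.

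I expect the only genuinely delicate point to be the bookkeeping of signs: one must verify that $g(Jx,y)=-g(x,Jy)$ and $\operatorname{tr}(J)=0$ really do hold identically in both the para-Hermitian ($J^2=\operatorname{Id}$, $J^*g=-g$) and pseudo-Hermitian ($J^2=-\operatorname{Id}$, $J^*g=g$) settings, so that one trace computation settles assertions (1) and (2) at once. No obstruction specific to dimension four intervenes beyond the appearance of the factor $\operatorname{tr}(\operatorname{Id})-2=2$; indeed the same argument yields uniqueness whenever $m\ge3$, the restriction $m=4$ being essential only for the existence statements established later.
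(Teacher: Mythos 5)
Your argument is correct, and it is genuinely different from the paper's. The paper proves Assertion~(1) by a bare-hands computation: it fixes the adapted isotropic frame of Equation~(\ref{eqn-2.1}), expands $\phi=\sum_i a_ie^i$, evaluates $[\Theta_{e_i},J_+]$ on specific pairs such as $(e_1,e_4)$ and $(e_2,e_3)$, and reads off $a_1=a_2=a_3=a_4=0$; it then handles Assertion~(2) by complexifying, choosing a frame of $J_-$-eigenvectors $\{Z_1,Z_2,\bar Z_1,\bar Z_2\}$, setting $J_+:=-\sqrt{-1}J_-$, and rerunning the para-Hermitian computation with complex coefficients. Your trace argument replaces all of this with one invariant computation: from $D(x,Jy)=JD(x,y)$ you get $\psi(Jy)\,x-\psi(y)\,Jx-g(x,Jy)\,\psi^{\#}+g(x,y)\,J\psi^{\#}=0$, and tracing in $x$ yields $(m-2)\,\psi(Jy)=0$, using only $\operatorname{tr}(J)=0$ and the skew-adjointness $g(Jx,y)=-g(x,Jy)$, both of which do hold in the two settings exactly as you verify (in the para-Hermitian case one should note that $\operatorname{tr}(J_+)=0$ because the $\pm1$-eigenspaces of $J_+$ are totally isotropic for $g$, hence each of dimension $\bar m$; in the pseudo-Hermitian case the real trace of $J_-$ vanishes since its eigenvalues come in conjugate pairs $\pm\sqrt{-1}$). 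What your route buys is uniformity and generality: both assertions follow from a single signature-independent computation, with no choice of frame and no complexification step, and it shows uniqueness of the (para)-K\"ahler--Weyl $1$-form in every even dimension $m\geq4$, not just $m=4$ — consistent with Theorem~\ref{thm-1.1}, which concerns existence, not uniqueness. What the paper's route buys is concreteness: the explicit frame computations set up the notation and the eigenbasis technique that Section~\ref{sect-3} and the analytic-continuation argument of Section~\ref{sect-4} reuse. Your only loose ends are the two sign checks you yourself flagged, and both check out.
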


\begin{proof}
Let $\phi=\phi_1-\phi_2$ and let
$$\Theta_X(Y)=\phi(X)Y+\phi(Y)X-g(X,Y)\phi^{\#}\,.$$
By Equation~(\ref{eqn-1.1}),  
$\nabla_X^{\phi_1}-\nabla_X^{\phi_2}=\Theta_X\in\operatorname{End}(TM)$. Consequently,
$$\{\nabla^{\phi_1}-\nabla^{\phi_2}\}J_\pm=0\quad\Rightarrow\quad [\Theta_X,J_\pm]=0\text{ for all }X\,.$$

We first deal with the para-Hermitian case. This is a purely algebraic computation. Let
$\{e_1,e_2,e_3,e_4\}$ be a local frame for $TM$ so that 
\begin{equation}\label{eqn-2.1}
\begin{array}{l}
J_+e_1=e_1,\quad J_+e_2=e_2,\quad J_+e_3=-e_3,\quad J_+e_4=-e_4,\\
g(e_1,e_3)=g(e_2,e_4)=1\,.\vphantom{\vrule height 12pt}
\end{array}\end{equation}
We expand $\phi=a_1e^1+a_2e^2+a_3e^3+a_4e^4$ and compute:
\begin{eqnarray*}
&&\Theta_{e_1}e_4=a_1e_4+a_4e_1,\ 
J_+\Theta_{e_1}e_4=-a_1e_4+a_4e_1,\ 
\Theta_{e_1}J_+e_4=-a_1e_4-a_4e_1,\\
&&\Theta_{e_2}e_3=a_2e_3+a_3e_2,\ 
J_+\Theta_{e_2}e_3=-a_2e_3+a_3e_2,\ 
\Theta_{e_2}J_+e_3=-a_2e_3-a_3e_2,\\
&&\Theta_{e_4}e_1=a_4e_1+a_1e_4,\ 
J_+\Theta_{e_4}e_1=\phantom{-}a_4e_1-a_1e_4,\ 
\Theta_{e_4}J_+e_1=\phantom{-}a_4e_1+a_1e_4,\\
&&\Theta_{e_3}e_2=a_3e_2+a_2e_3,\ 
J_+\Theta_{e_3}e_2=\phantom{-}a_3e_2-a_2e_3,\ 
\Theta_{e_3}J_+e_2=\phantom{-}a_3e_2+a_2e_3\,.
\end{eqnarray*}
Equating $\Theta_{e_i}J_+e_j$ with $J_+\Theta_{e_i}e_j$ then implies
$a_1=a_2=a_3=a_4=0$ so $\phi=0$ and $\phi_1=\phi_2$. This establishes Assertion~(1).

Next assume we are in the pseudo-Hermitian setting. Complexify and extend $g$ to be complex bilinear. Choose a local frame $\{Z_1,Z_2,\bar Z_1,\bar Z_2\}$ for
$TM\otimes_{\mathbb{R}}\mathbb{C}$ so
$$\begin{array}{ll}
J_-Z_1=\sqrt{-1}Z_1,&J_-Z_2=\sqrt{-1}Z_2,\\
J_-\bar Z_1=-\sqrt{-1}\bar Z_1,&J_-\bar Z_2=-\sqrt{-1}\bar Z_2,\\
g(Z_1,\bar Z_1)=1,&g(Z_2,\bar Z_2)=\varepsilon_2
\end{array}$$
where we take $\varepsilon_2=+1$ in signature $(0,4)$ and $\varepsilon_2=-1$ in signature $(2,2)$. We set
$J_+:=-\sqrt{-1}J_-$, $e_1:=Z_1$, $e_2:=Z_2$, $e_3:=\bar Z_1$, and $e_4:=\varepsilon_2\bar Z_2$ and
apply the argument given to prove Assertion~(1) (where the coefficients $a_i$ are now complex) to derive Assertion~(2). \end{proof}

\section{Para-Hermitian geometry}\label{sect-3}
\subsection{The algebraic context}
Let $(V,\langle\cdot,\cdot\rangle,J_+)$ be a para-Hermitian vector space of dimension $4$. 
Here $\langle\cdot,\cdot\rangle$ is an inner product 
on $V$ of signature $(2,2)$ and $J_+$ is an endomorphism of $V$ satisfying $J_+^2=\operatorname{Id}$ and 
$J_+^*\langle\cdot,\cdot\rangle=-\langle\cdot,\cdot\rangle$. We may then choose a basis $\{e_1,e_2,e_3,e_4\}$ for $V=\mathbb{R}^4$ so that
the relations of Equation~(\ref{eqn-2.1}) are satisfied. The K\"ahler form and orientation $\dvol$ are then given by:
$$\Omega_+=-e^1\wedge e^3-e^2\wedge e^4\quad\text{ and }\quad
\dvol=\textstyle\frac12\Omega_+\wedge\Omega_+=e^1\wedge e^3\wedge e^2\wedge e^4\,.$$
Let $\star$ be the {\it Hodge operator}. This operator is characterized by the relation:
$$\omega_1\wedge\star\omega_2=\langle\omega_1,\omega_2\rangle e^1\wedge e^3\wedge e^2\wedge e^4\text{ for all }\omega_i\,.
$$
Consequently:
\begin{equation}\label{eqn-3.a}
\begin{array}{ll}
\star e^1\wedge e^3=-e^2\wedge e^4,&
\star e^2\wedge e^4=-e^1\wedge e^3,\\
\star e^1\wedge e^2\wedge e^3=-e^2,&
\star e^1\wedge e^2\wedge e^4=\phantom{-}e^1,\\
\star e^1\wedge e^3\wedge e^4=-e^4,&
\star e^2\wedge e^3\wedge e^4=\phantom{-}e^3,\vphantom{\vrule height 10pt}.
\end{array}\end{equation}

\subsection{Example}\label{sect-3.2}
\rm We begin the proof of Theorem~\ref{thm-1.2} by considering a very specific example.
Let $(x^1,x^2,x^3,x^4\}$ be the usual coordinates on $\mathbb{R}^4$,
let $\partial_i:=\partial_{x_i}$, and let $J_+$ be the standard para-complex structure:
$$J_+\partial_1=\partial_1,\quad J_+\partial_2=\partial_2,\quad
    J_+\partial_3=-\partial_3,\quad J_+\partial_4=-\partial_4\,.$$
Let $f(0)=0$. We take the metric to have non-zero components determined by:
$$g(\partial_1,\partial_3)=1\text{ and }
    g(\partial_2,\partial_4)=e^{2f}\,.
$$
and let $f_i:=\{\partial_if\}(0)$. The (possibly) non-zero Christoffel symbols of $\nabla^g$ at the origin are given by:
$$\begin{array}{l}
g(\nabla^g_{\partial_1}\partial_2,\partial_4)=
g(\nabla^g_{\partial_2}\partial_1,\partial_4)=
g(\nabla^g_{\partial_1}\partial_4,\partial_2)=
g(\nabla^g_{\partial_4}\partial_1,\partial_2)=f_1,\\
g(\nabla^g_{\partial_3}\partial_2,\partial_4)=
g(\nabla^g_{\partial_2}\partial_3,\partial_4)=
g(\nabla^g_{\partial_3}\partial_4,\partial_2)=
g(\nabla^g_{\partial_4}\partial_3,\partial_2)=f_3,\\
g(\nabla^g_{\partial_4}\partial_4,\partial_2)=2f_4,\qquad\qquad\qquad\qquad
g(\nabla^g_{\partial_2}\partial_2,\partial_4)=2f_2,\\
g(\nabla^g_{\partial_2}\partial_4,\partial_1)=
g(\nabla^g_{\partial_4}\partial_2,\partial_1)=-f_1,\quad
g(\nabla^g_{\partial_2}\partial_4,\partial_3)=
g(\nabla^g_{\partial_4}\partial_2,\partial_3)=-f_3,\\
\end{array}$$
Consequently the (possibly) non-zero covariant derivatives at the origin are:
$$\begin{array}{ll}
\nabla^g_{\partial_1}\partial_2=\nabla^g_{\partial_2}\partial_1=f_1\partial_2,&
  \nabla^g_{\partial_1}\partial_4=\nabla^g_{\partial_4}\partial_1=f_1\partial_4,\\
\nabla^g_{\partial_3}\partial_2=\nabla^g_{\partial_2}\partial_3=f_3\partial_2,&
  \nabla^g_{\partial_3}\partial_4=\nabla^g_{\partial_4}\partial_3=f_3\partial_4,\\
\nabla^g_{\partial_4}\partial_4=2f_4\partial_4,&
\nabla^g_{\partial_2}\partial_2=2f_2\partial_2,\\
\nabla^g_{\partial_2}\partial_4=\nabla^g_{\partial_4}\partial_2
=-f_1\partial_3-f_3\partial_1.
\end{array}$$
Since $\nabla^g_{\partial_1}$ and $\nabla^g_{\partial_3}$ are diagonal, they commute with $J_+$ so
$\nabla^g_{\partial_1}(J_+)=\nabla^g_{\partial_3}(J_+)=0$. We compute:
\begin{eqnarray*}
&&(\nabla^g_{\partial_2}J_+)\partial_1=(1-J_+)\nabla^g_{\partial_2}\partial_1
=(1-J_+)f_1\partial_2=0,\\
&&(\nabla^g_{\partial_2}J_+)\partial_2=(1-J_+)\nabla^g_{\partial_2}\partial_2=(1-J_+)2f_2\partial_2=0,\\
&&(\nabla^g_{\partial_2}J_+)\partial_3=(-1-J_+)\nabla^g_{\partial_2}\partial_3=
(-1-J_+)f_3\partial_2=-2f_3\partial_2,\\
&&(\nabla^g_{\partial_2}J_+)\partial_4=(-1-J_+)\nabla^g_{\partial_2}\partial_4=
(-1-J_+)(-f_1\partial_3-f_3\partial_1)
=2f_3\partial_1,\\
&&(\nabla^g_{\partial_4}J_+)\partial_1=(1-J_+)\nabla^g_{\partial_4}\partial_1
=(1-J_+)f_1\partial_4=2f_1\partial_4,\\
&&(\nabla^g_{\partial_4}J_+)\partial_2=(1-J_+)\nabla^g_{\partial_4}\partial_2
=(1-J_+)(-f_1\partial_3-f_3\partial_1)=-2f_1\partial_3,\\
&&(\nabla^g_{\partial_4}J_+)\partial_3=(-1-J_+)\nabla^g_{\partial_4}\partial_3=
(-1-J_+)f_3\partial_4=0,\\
&&(\nabla^g_{\partial_4}J_+)\partial_4=(-1-J_+)\nabla^g_{\partial_4}\partial_4=(-1-J_+)2f_4\partial_4=0\,.
\end{eqnarray*}
We apply Equation~(\ref{eqn-3.a}). We have $\star\Omega_+=-\Omega_+$.
Setting $e^1=dx^1$, $e^2=e^fdx^2$, $e^3=dx^3$, and $e^4=e^fdx^4$ and recalling $f(0)=0$ yields
\begin{eqnarray*}
&&\star \Omega_+=-\Omega_+=dx^1\wedge dx^3+e^{2f}dx^2\wedge dx^4,\\
&&d\star \Omega_+=2f_1dx^1\wedge dx^2\wedge dx^4-2f_3dx^2\wedge dx^3\wedge dx^4,\\
&&\delta_g\Omega_+(0)=-\star d\star \Omega_+(0)=-2f_1dx^1+2f_3dx^3,\\
&&\phi(0)=\textstyle\frac12J\delta_g\Omega_+=-f_1dx^1-f_3dx^3,\text{ and }
\phi^{\#}=-f_1\partial_3-f_3\partial_1\,.
\end{eqnarray*}
Let
$\Theta_{ij}:=\phi(\partial_i)\partial_j+\phi(\partial_j)\partial_i
-g(\partial_i,\partial_j)\phi^{\#}=(\nabla^\phi-\nabla^g)_{\partial_i}\partial_j$ at $0$. Then:
\begin{eqnarray*}
&&\Theta_{11}=-2f_1\partial_1,\quad\Theta_{12}=-f_1\partial_2,\\
&&\Theta_{13}=(-f_1\partial_3-f_3\partial_1)+(f_1\partial_3+f_{3}\partial_1)=0,\\
&&\Theta_{14}=-f_1\partial_4,\quad\Theta_{22}=0,\quad\Theta_{23}=-f_3\partial_2,\\
&&\Theta_{24}=(f_1\partial_3+f_3\partial_1),\quad
\Theta_{33}=-2f_3\partial_3,\quad\Theta_{34}=-f_3\partial_4,
\quad\Theta_{44}=0\,.
\end{eqnarray*}
Since $\Theta(\partial_1)$ and $\Theta(\partial_3)$ are diagonal,
 $[\Theta(\partial_1),J_+]=[\Theta(\partial_3),J_+]=0$. We compute:
 \begin{eqnarray*}
 &&[\Theta(\partial_2),J_+]\partial_1=(1-J_+)\Theta_{12}=0,\\
 &&[\Theta(\partial_2),J_+]\partial_2=(1-J_+)\Theta_{22}=0,\\
 &&[\Theta(\partial_2),J_+]\partial_3=(-1-J_+)\Theta_{23}=2f_3\partial_2,\\
 &&[\Theta(\partial_2),J_+]\partial_4=(-1-J_+)\Theta_{24}=-2f_3\partial_1,\\
 &&[\Theta(\partial_4),J_+]\partial_1=(1-J_+)\Theta_{14}=-2f_1\partial_4,\\
 &&[\Theta(\partial_4),J_+]\partial_2=(1-J_+)\Theta_{24}=2f_1\partial_3,\\
 &&[\Theta(\partial_4),J_+]\partial_3=(-1-J_+)\Theta_{34}=0,\\
 &&[\Theta(\partial_4),J_+]\partial_4=(-1-J)\Theta_{44}=0\,.
 \end{eqnarray*}
 We now observe that $[\nabla^g,J_+]+[\Theta,J_+]=0$. Consequently $\nabla^\phi J_+=0$ for this
metric and Assertion~(1) of Theorem~\ref{thm-1.2} holds in this special case.

 \subsection{The proof of Theorem~\ref{thm-1.2}~(1)}
  Let $V=\mathbb{R}^4$, let $S^2_-$ be the vector space of symmetric $2$-cotensors $\omega$ so that $J_+^*\omega=-\omega$,
   and let
 $\varepsilon\in C^\infty(S^2)$
 satisfy $\varepsilon(0)=0$. We use $\varepsilon$ to define a perturbation of the flat metric by setting:
 $$g=dx^1\circ dx^3+dx^2\circ dx^4+\varepsilon\,.$$
 This is non-degenerate near the origin. Since only the $1$-jets of $\varepsilon$ are relevant in examining $\nabla^\phi(J_+)(0)$, 
 this is a linear problem and we may take $\varepsilon\in S_-^2\otimes V^\star$ so:
 $$g=g_0+\sum_ix^i\varepsilon(e_i)\,.$$
Then $\varepsilon\rightarrow(\nabla^\phi J_+)(0)$ defines a linear map
\begin{eqnarray*}
&&\mathcal{E}:S_-(V)\otimes V^*\rightarrow\operatorname{End}(V)\otimes V^*
\text{ or equivalently}\\
&&\mathcal{E}:S_-(V)\rightarrow\operatorname{Hom}(V^*,\operatorname{End}(V)\otimes V^*)\,.
\end{eqnarray*}
The analysis of Section~\ref{sect-3.2} shows that $\mathcal{E}(dx^2\circ dx^4)=0$.
Permuting the indices $1\leftrightarrow2$ and $3\leftrightarrow4$ then yields
$\mathcal{E}(dx^1\circ dx^3)=0$. The question is
invariant under the action of the para-unitary group; we must preserve $J_+$ and we must
preserve the inner product at the origin. Define a unitary transformation $T$ by setting:
$$\begin{array}{ll}
T(e^1)=e^1+ae^2,&T(e^2)=e^2,\\
T(e^3)=e^3,&T(e^4)=e^4-ae^3.
\end{array}$$
Then
$$T(e^1\wedge e^3)=e^1\wedge e^3+ae^2\wedge e^3$$
Consequently, $\mathcal{E}(e^2\wedge e^3)=0$. Permuting the indices
$1\leftrightarrow2$ and $3\leftrightarrow4$
then yields $\mathcal{E}(e^1\wedge e^4)=0$. Since 
$$
\mathcal{S}_-=\operatorname{Span}\{e^1\wedge e^3,e^1\wedge e^4,e^2\wedge e^3,e^2\wedge e^4\}\,,
$$
we see that $\mathcal{E}=0$ in general; this completes
the proof of Assertion~(1) of Theorem~\ref{thm-1.2}.\hfill\qed

\section{Hermitian and pseudo-Hermitian manifolds}\label{sect-4}

In Section~\ref{sect-4}, we will use analytic continuation to derive
Theorem~\ref{thm-1.2} in the complex setting from Theorem~\ref{thm-1.2} in the para-complex setting.
Let $V=\mathbb{R}^4$ with the usual basis $\{e_1,e_2,e_3,e_4\}$ 
and coordinates
$\{x^1,x^2,x^3,x^4\}$ where we expand $v=x^1e_1+x^2e_2+x^3e_3+x^4e_4$. Let $S^2$ denote the space of symmetric $2$-tensors. We
complexity and consider 
$$
\mathcal{S}:=\left\{S^2\otimes_{\mathbb{R}}\mathbb{C}\right\}\ \oplus\ 
\left\{(V^*\otimes_{\mathbb{R}}S^2)\otimes_{\mathbb{R}}\mathbb{C}\right\}\,.
$$
Let $J_+\in M_2(\mathbb{C})$ be a complex $2\times 2$ matrix with $J_+^2=\operatorname{Id}$ and $\operatorname{Tr}(J_+)=0$.
Let:
\begin{equation}\label{eqn-4.a}
\mathcal{S}(J_+):=\{(g_0,g_1)\in\mathcal{S}:\det(g_0-J_+^*g_0)\ne0\}\,.
\end{equation}
For $(g_0,g_1)\in\mathcal{S}(J_+)$, 
define:
\begin{eqnarray*}
&&g(x)(X,Y):=\textstyle\frac12\{g_0(X,Y)-g_0(J_+X,J_+Y)\}\\
&&\qquad+\sum_{i=1}^4x^i\cdot{\textstyle\frac12}\left\{g_1(e_i,X,Y)-g_1(e_i,J_+X,J_+Y)\right\}\,.
\end{eqnarray*}
By Equation~(\ref{eqn-4.a}), this is non-degenerate at $0$ and defines a complex metric on some neighborhood of $0$ so $J_+^*g=-g$. 
Let $\nabla^g$ be the complex Levi--Civita connection:
$$\nabla^g_{\partial_i}\partial_j=\textstyle\frac12g^{kl}\{\partial_ig_{jl}+\partial_jg_{il}-\partial_{x_l}g_{ij}\}\partial_k\,.$$
Then $\nabla^g$ is a torsion free connection on 
$T_{\mathbb{C}}M:=T_M\otimes_{\mathbb{R}}\mathbb{C}$. The para-K\"ahler form
is defined by setting $\Omega_+(x,y)=g(x,J_+y)$ and
we have
$$\delta\Omega_+=\star d\Omega_+\text{ and }\phi:=\textstyle\frac12J_+\delta_g\Omega\,.
$$
We then use $\phi$ to define a complex Weyl connection $\nabla^\phi$ on $T_{\mathbb{C}}M$ and 
define a holomorphic map from $\mathcal{S}(J_+)$ to 
$\mathfrak{V}:=V^*\otimes M_4(\mathbb{C})$
by setting:
$$\mathcal{E}(g_0,g_1;J_+):=\nabla^\phi(J_+)|_{x=0}\,.$$
\begin{lem}\label{lem-4.1}
Let $J_+\in M_4(\mathbb{C})$ with $J_+^2=\operatorname{id}$ and $\operatorname{Tr}(J_+)=0$. 
Suppose that $(g_0,g_1)\in\mathcal{S}(J_+)$.
\begin{enumerate}
\item If $J_+$ is real and if $(g_0,g_1)$ is real, then $\mathcal{E}(g_0,g_1;J_+)=0$.
\item If $J_+$ is real and if $(g_0,g_1)$ is complex, then $\mathcal{E}(g_0,g_1;J_+)=0$.
\item If $J_+$ is complex and if $(g_0,g_1)$ is complex, then $\mathcal{E}(g_0,g_1;J_+)=0$.
\end{enumerate}
\end{lem}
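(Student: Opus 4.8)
The plan is to deduce all three assertions from the already-established para-Hermitian case (Assertion~(1) of Theorem~\ref{thm-1.2}) by two successive steps, exploiting the holomorphic and $GL_4(\mathbb{C})$-equivariant nature of the construction of $\mathcal{E}$. Assertion~(1) of the lemma I would dispose of first: for real $J_+$ and real $(g_0,g_1)$ the pair $(g(x),J_+)$ is an honest germ of a real para-Hermitian structure of signature $(2,2)$, and the objects $g^{kl}$, $\nabla^g$, $\star$, $\Omega_+$, $\phi$, $\nabla^\phi$ coincide with their real counterparts. Hence $\mathcal{E}(g_0,g_1;J_+)=\nabla^\phi(J_+)|_{x=0}=0$ is exactly the conclusion of Assertion~(1) of Theorem~\ref{thm-1.2}, which is coordinate-free and so applies to every real admissible $J_+$, not merely the standard one.

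For Assertion~(2) I would fix the real matrix $J_+$ and view $\mathcal{E}(\,\cdot\,;J_+)$ as a map on $\mathcal{S}(J_+)\subseteq\mathcal{S}$. Since $\mathcal{S}$ is a complex vector space and $\mathcal{S}(J_+)$ is the complement of the zero locus of the single polynomial $g_0\mapsto\det(g_0-J_+^*g_0)$ — a proper algebraic hypersurface, as $\mathcal{S}(J_+)$ is nonempty by the real examples — the domain $\mathcal{S}(J_+)$ is connected. The map $\mathcal{E}(\,\cdot\,;J_+)$ is holomorphic there, because at $x=0$ the metric is non-degenerate and the inverse metric, the Christoffel symbols, and the Hodge operator are rational in the entries of $(g_0,g_1)$ with non-vanishing denominator $\det g(0)$. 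By Assertion~(1) it vanishes on the real points of $\mathcal{S}(J_+)$, which form a maximal totally real submanifold and hence a uniqueness set: a holomorphic function on a connected open $U\subseteq\mathbb{C}^N$ that vanishes on the nonempty open set $U\cap\mathbb{R}^N$ has vanishing Taylor expansion at each real point and so vanishes on all of $U$. Thus $\mathcal{E}(\,\cdot\,;J_+)\equiv0$, giving Assertion~(2).

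For Assertion~(3) I would vary $J_+$ itself by equivariance rather than by a second direct continuation. Because $J_+^2=\operatorname{Id}$ forces eigenvalues $\pm1$ and $\operatorname{Tr}(J_+)=0$ forces multiplicities $(2,2)$, every admissible complex $J_+$ is $GL_4(\mathbb{C})$-conjugate to the standard real structure $J_+^0=\operatorname{diag}(1,1,-1,-1)$, say $J_+=T^{-1}J_+^0T$ with $T\in GL_4(\mathbb{C})$. The whole construction is natural under a complex-linear change of coordinates $T$: pulling back $(g_0,g_1)$ by $T$ intertwines $\nabla^g$, $\star$, $\Omega_+$, $\phi$, and $\nabla^\phi$ with the corresponding objects attached to $J_+^0$, so that $\mathcal{E}(g_0,g_1;J_+)=0$ if and only if $\mathcal{E}(T^*g_0,T^*g_1;J_+^0)=0$. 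Since $J_+^0$ is real and the transformed data is complex, the latter vanishes by Assertion~(2), which proves Assertion~(3).

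I expect the main obstacle to be verifying the two structural properties invoked above. First, that $\mathcal{E}$ is genuinely holomorphic on all of $\mathcal{S}(J_+)$, which reduces to checking that the complexified Hodge operator and the metric inverse depend holomorphically on the data at the non-degenerate point $x=0$. Second, that the construction is strictly $GL_4(\mathbb{C})$-equivariant, meaning the complexified Levi--Civita connection and Hodge star are given by the same formulas in the new frame. Both are routine once set up, but the equivariance of the Hodge operator — which involves the orientation and $\det g$ — is the point demanding the most care.
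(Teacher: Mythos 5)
Your Assertions (1) and (2) track the paper's proof essentially verbatim: (1) is reduced to Theorem~\ref{thm-1.2}~(1) (your observation that any real $J_+$ with $J_+^2=\operatorname{Id}$, $\operatorname{Tr}(J_+)=0$ and $J_+^*g=-g$ automatically yields neutral signature, so the theorem applies to every real admissible $J_+$, is a worthwhile explicit check), and (2) is the same holomorphic continuation from the real points of $\mathcal{S}(J_+)$, where your justification is in fact more careful than the paper's -- you note that $\mathcal{S}(J_+)$ is connected as the complement of the hypersurface $\det(g_0-J_+^*g_0)=0$ and that the real points form a maximal totally real uniqueness set, while the paper simply invokes ``the identity theorem.'' The genuine divergence is Assertion (3). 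The paper runs a \emph{second} analytic continuation: fixing a real $J_+$, it defines $\mathcal{F}(A):=\mathcal{E}(A\cdot(g_0,g_1;J_+))$, holomorphic in $A\in\operatorname{GL}_4(\mathbb{C})$, vanishing on $\operatorname{GL}_4(\mathbb{R})$ by Assertion (2), hence vanishing identically by the identity theorem; this needs only that the $\operatorname{GL}_4(\mathbb{C})$-action on the data be holomorphic in $A$, and \emph{not} that $\mathcal{E}$ be equivariant. You instead conjugate once by a single $T\in\operatorname{GL}_4(\mathbb{C})$ bringing $J_+$ to the real normal form and invoke strict naturality of the complexified construction, so that $\mathcal{E}(g_0,g_1;J_+)=0$ iff $\mathcal{E}(T^*g_0,T^*g_1;J_+^0)=0$. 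Your route is more elementary -- no second use of the identity theorem -- but shifts the burden onto the equivariance verification you flag at the end. That verification does go through, and the specific worry you raise about the Hodge operator is defused by the paper's conventions: the volume form is defined as $\frac12\Omega_+\wedge\Omega_+$, a polynomial expression in the data that pulls back naturally under any $T\in\operatorname{GL}_4(\mathbb{C})$, rather than via a square root of $\det g$; had a square root been used, both your naturality claim and the holomorphy of $\mathcal{E}$ needed already in step (2) would require a branch choice. Note also that both routes ultimately rest on the same normal-form fact, which you should state and which the paper uses without proof: every $J_+\in M_4(\mathbb{C})$ with $J_+^2=\operatorname{Id}$ and $\operatorname{Tr}(J_+)=0$ is $\operatorname{GL}_4(\mathbb{C})$-conjugate to a real one (eigenvalues $\pm1$ with multiplicities $(2,2)$ and diagonalizability give this immediately).
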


\begin{proof} Assertion (1) follows from Theorem~\ref{thm-1.2}~(1).
We argue as follows to prove Assertion~(2).
$\mathcal{S}(J_+)$ is an open dense subset of $\mathcal{S}$ and inherits a natural
holomorphic structure thereby. Assume that $J_+$ is real. The map $\mathcal{E}$ is a holomorphic
map from $\mathcal{S}(J_+)$ to $\mathfrak{V}$. By
Assertion~(1), $\mathcal{E}(g_0,g_1;J_+)$ vanishes if $(g_0,g_1)$ is real. Thus, by the identity theorem, $\mathcal{E}(g_0,g_1;J_+)$ vanishes
for all $(g_0,g_1)\in{\mathcal{S}}_{J_+}$. This establishes Assertion~(2) by removing the assumption that $(g_0,g_1)$ is real.

We complete the proof by removing the assumption that $J_+$ is real.
The general linear group $\operatorname{GL}_4(\mathbb{C})$ acts on the structures involved
by change of basis (i.e. conjugation). Let $(g_0,g_1)\in\mathcal{S}(J_+)$ where $J_+$ is real and $\operatorname{Tr}(J_+)=0$. 
We consider the real
and complex orbits:
\begin{eqnarray*}
&&\mathcal{O}_{\mathbb{R}}(g_0,g_1;J_+):=\operatorname{GL}_4(\mathbb{R})\cdot(g_0,g_1;J_+),\\
&&\mathcal{O}_{\mathbb{C}}(g_0,g_1;J_+):=\operatorname{GL}_4(\mathbb{C})\cdot(g_0,g_1;J_+)\,.
\end{eqnarray*}
Let $\mathcal{F}(A):=\mathcal{E}(A\cdot(g_0,g_1;J_+))$ define a holomorphic map
from $\operatorname{GL}_4(\mathbb{C})$ to $\mathfrak{V}$. 
By Assertion (2), $\mathcal{F}$ vanishes on $\operatorname{GL}_4(\mathbb{R})$. Thus by
the identity theorem, $\mathcal{F}$ vanishes on $\operatorname{GL}_4(\mathbb{C})$ or,
equivalently, $\mathcal{E}$ vanishes on the orbit space $\mathcal{O}_{\mathbb{C}}(g_0,g_1;J_+)$.
Given any $J_+\in M_4(\mathbb{C})$ with $J_+^2=\operatorname{Id}$ and $\operatorname{Tr}(J_+)=0$, we
can choose $A\in\operatorname{GL}_4(\mathbb{C})$ so that $A\cdot J_+$ is real.
The general case now follows from Assertion (2).
\end{proof}

\subsection{The proof of Theorem~\ref{thm-1.2}~(2,3)} Let $(M,g,J_-)$ be a $4$-dimensional pseudo-Hermitian
manifold of dimension $4$. Fix a point $P$ of $M$. Since $J_-$ is integrable, we may
choose local coordinates
$(x^1,x^2,x^3,x^4)$ so the matrix of $J_-$ relative to the coordinate frame $\{\partial_i\}$
is constant. Define a Weyl connection with associated 1-form given by
$\phi=-\frac12J_-\delta\Omega_-$. Only the $0$ and the $1$-jets of
the metric play a role in the computation of $(\nabla^\phi J_-)(P)$. 
So we may assume $g=g(g_0,g_1)$. We set $J_+=\sqrt{-1}J_-$.
We have that
\begin{eqnarray*}
&&J_+^2=\sqrt{-1}J_-\sqrt{-1}J_-=-J_-^2=\operatorname{id},\quad
  \operatorname{Tr}(J_+)=\sqrt{-1}\operatorname{Tr}(J_-)=0,\\
 &&J_+^\star(g)(X,Y)=g(\sqrt{-1}J_-X,\sqrt{-1}J_-Y)=-g(J_-X,J_-Y)=-g(X,Y)
 \end{eqnarray*}
so $J_+^\star(g)=-g$ and $(g_0,g_1)\in\mathcal{S}_{J_+}$. Finally, since $J_-=-\sqrt{-1}J_+$, we have
\begin{eqnarray*}
&&\Omega_-=-\sqrt{-1}\Omega_+,\\
&&\phi_{J_-}=-\textstyle\frac12J_-\delta_g\Omega_-=-\textstyle\frac12(-\sqrt{-1}J_+)\delta_g(-\sqrt{-1}\Omega_+)=
\frac12J_+\delta_g\Omega_+=\phi_{J_+}\,.
\end{eqnarray*}
We apply Lemma~\ref{lem-4.1} to complete the proof.
\hfill\qed

\end{document}